\newtheorem{theorem}{theorem}[section]
\newtheorem{lem}[theorem]{Lemma}
\newtheorem{alg}[theorem]{Algorithm}
\newtheorem{defn}[theorem]{Definition}
\newtheorem{exmp}[theorem]{Example}
\newtheorem{rmk}[theorem]{Remark}
\newtheorem{nota}[theorem]{Notation}
\newtheorem{conv}[theorem]{Convention}
\newtheorem{nota-conv}[theorem]{Notation $\&$ Convention}
\begin{document}

\title{\textbf{Cohomological invariants of representations of 3-manifold groups}}
\author{\Large Haimiao Chen \footnote{Email: \emph{chenhm@math.pku.edu.cn}} \\
\normalsize \em{Mathematics, Beijing Technology and Business University, Beijing, China}}
\date{}
\maketitle

\begin{abstract}
  Suppose $\Gamma$ is a discrete group, and $\alpha\in Z^3(B\Gamma;A)$, with $A$ an abelian group. Given a representation $\rho:\pi_1(M)\to\Gamma$, with $M$ a closed 3-manifold, put $F(M,\rho)=\langle(B\rho)^\ast[\alpha],[M]\rangle$, where $B\rho:M\to B\Gamma$ is a continuous map inducing $\rho$ which is unique up to homotopy, and $\langle-,-\rangle:H^3(M;A)\times H_3(M;\mathbb{Z})\to A$ is the pairing. We extend the definition of $F(M,\rho)$ to manifolds with corners, and establish a gluing law. Based on these, we present a practical method for computing $F(M,\rho)$ when $M$ is given by a surgery along a link $L\subset S^3$. In particular, the Chern-Simons invariant can be computed this way.

  \medskip
  \noindent {\bf Keywords:} cohomological invariant, 3-manifold, fundamental group, representation, Chern-Simons invariant   \\
  {\bf MSC 2020:}  57K31
\end{abstract}

\section{Introduction}

Suppose $\Gamma$ is a discrete group, and $\alpha\in Z^3(B\Gamma;A)$, with $A$ an abelian group. Given a representation $\rho:\pi_1(M)\to\Gamma$, with $M$ a closed 3-manifold, put
\begin{align}
F(M,\rho)=\langle(B\rho)^\ast[\alpha],[M]\rangle\in A,  \label{eq:def-F}
\end{align}
where $B\rho:M\to B\Gamma$ is a continuous map inducing $\rho$ which is unique up to homotopy,
and $\langle-,-\rangle:H^3(M;A)\times H_3(M;\mathbb{Z})\to A$ is the pairing.
It is a subtle problem to define $F(M,\rho)$ when $\partial M\ne\emptyset$, and will be handled in this paper.

There are at least two reasons for us to care about this cohomological invariant.
First, if $\Gamma$ is a finite group and $A=\mathbb{R}/\mathbb{Z}$, then
$$\frac{1}{\#\Gamma}\sum\limits_{\rho:\pi_1(M)\to\Gamma}\exp\big(\sqrt{-1}F(M,\rho)\big)\in\mathbb{C}$$
is by definition the {\it Dijkgraaf-Witten invariant} of $M$ associated to $[\alpha]$.
Second, if $A=\mathbb{C}/\mathbb{Z}$ and $\Gamma={\rm SL}(n,\mathbb{C})^{\delta}$ viewed as a discrete group, then for a certain $\alpha$ representing the {\it Cheeger-Chern-Simons class} $\hat{C}_2\in H^3(B\Gamma;\mathbb{C}/\mathbb{Z})$, one has that $F(M,\rho)$ equals the {\it Chern-Simons invariant} (CSI for short) ${\rm CS}(\rho)$ which is meant to be that of the flat connection corresponding to $\rho$.

The importance of CSI is manifested in several aspects of geometry and topology; see \cite{DG11,DLW15,Fr95,Fr09,Wi89} and the references therein.
There have been many works in the literature on computing CSI.
Zickert \cite{Zi09} gave a formula for boundary-parabolic ${\rm SL}(2,\mathbb{C})$-representations, for $M$ with tori boundary. Hatakenaka and Nosaka \cite{HN12}, Inoue and Kabaya \cite{IK14} used quandle to derive a new formula for
$G={\rm SL}(2,\mathbb{C})$. March\'e \cite{Ma14} filled all tetrahedra with a connection as explicit as possible, and computed the contribution of each teterhedron. Garoufalidits, Thurston and Zickert \cite{GTZ15} gave a formula for any $M$ and $G={\rm SL}(n,\mathbb{C})$.
Besides, computations for concrete manifolds are seen in: Kirk and Klassen \cite{KK90}; Cho, Murakami and Yokota \cite{CMY09}; Ham and Lee \cite{HL17}.

We aim to present a convenient method for computing general cohomological invariants. It is more flexible in that there needs to be no restriction on boundary.

In Section 2, we set up a general framework for $F(M,\rho)$, and reveal some fine structures around; in particular, we define $F(M,\rho)$ when $M$ is a 3-manifold with boundary. This is largely based on \cite{Ch16} Section 2, which in turn was an exposition using algebraic notions of the construction given by Freed \cite{Fr94}.

In Section 3, we propose a method for computing $F(M,\rho)$ when $M$ is the complement of a link in $S^3$. An efficient procedure is designed, starting from a link diagram. After that, if a closed 3-manifold $M$ is given as a surgery along a link, then $F(M,\rho)$ can be written down. All these are in accord with the sprit of Turaev's
{\it homotopy quantum field theory} \cite{Tu10}.

\medskip

{\bf Acknowledgement} \\
The author is supported by NSFC-11771042.

\section{General framework}

\subsection{Preparation}

\begin{conv}
\rm In this paper, all manifolds are oriented. For a manifold $M$, let $-M$ denote the one obtained by reversing the orientation of $M$.
\end{conv}

For a topological space $X$, let $\Pi_1(X)$ denote the fundamental groupoid of $X$, i.e. the category whose objects are points of $X$ and whose morphisms are homotopy classes of paths. Let $\mathfrak{B}(X)=\mathcal{F}un(\Pi_1(X),\Gamma)$, the set of functors $\Pi_1(X)\to\Gamma$, where $\Gamma$ is viewed as a groupoid with a single object.

Suppose $S$ is a finite subset of $X$ such that each connected component of $X$ contains at least one point from $S$.
Let $\Pi_1^S(X)$ be the full subgroupoid of $\Pi_1(X)$ with $S$ as the set of objects, then $\Pi_1^S(X)$ is equivalent to $\Pi_1(X)$ through the inclusion $S\subset X$.
There is a restriction
$$\mathfrak{B}(X)\to\mathfrak{B}^S(X):=\mathcal{F}un(\Pi^S_1(X),\Gamma).$$
For $\rho\in\mathfrak{B}(X)$, abusing the notation, we denote its image in $\mathfrak{B}^S(X)$ also by $\rho$.
In most situations in this paper, it is sufficient to consider $\mathfrak{B}^S(X)$ for some $S$.

Let $\Delta^k=[v_0,\ldots,v_k]$ denote the standard $k$-simplex. For each singular $k$-simplex $\sigma:\Delta^k\to X$, let $\sigma_i=\sigma(v_i)$ and let $\sigma_{ij}=\sigma|_{[v_i,v_j]}:[v_i,v_j]\to X$. Abbreviate $C_k(X;\mathbb{Z})$ to $C_k(X)$.

Let $\phi\in\mathfrak{B}(X)$. Given a singular $k$-chain $\xi=\sum_in_i\sigma^i\in C_k(X)$, put
\begin{align}
\xi\langle\phi\rangle=\sum_in_i\sigma^i\langle\phi\rangle\in C_k(B\Gamma),
\end{align}
where for each singular $k$-simplex $\sigma\in C_k(B\Gamma)$, we set
$\sigma\langle\phi\rangle=[\phi(\sigma_{01})|\cdots|\phi(\sigma_{k-1,k})]$.

Let $A$ be an abelian group, for which additive notations are used, and let $\Gamma$ be a discrete group. A 3-cocycle $\alpha\in Z^3(B\Gamma;A)=Z^3(\Gamma;A)$ is a function $\Gamma\times\Gamma\times\Gamma\to A$ satisfying
\begin{align*}
\alpha(x,y,z)-\alpha(xy,z,w)+\alpha(x,yz,w)-\alpha(x,y,zw)+\alpha(y,z,w)=0
\end{align*}
for all $x,y,z,w\in\Gamma$.
Recall that (see \cite{Ha02} Page 89) $B\Gamma$ has a simplicial model, in which $k$-simplices are ordered $k$-tuples $[x_1|\cdots|x_k]$ with $x_i\in\Gamma$. The boundary map $\partial_k:C_k(B\Gamma)\to C_{k-1}(B\Gamma)$ is given by
$$\partial[x_1|\cdots|x_k]=[x_2|\cdots|x_k]+\sum\limits_{i=1}^{k-1}(-1)^i
[x_1|\cdots|x_ix_{i+1}|\cdots|x_k]+(-1)^k[x_1|\cdots|x_{k-1}].$$
The value of $\alpha$ taking at $[x|y|z]$ is $\alpha(x,y,z)$.

\subsection{Extending to manifolds with boundary}

\begin{defn}
\rm For an $n$-manifold $M$, call a singular chain $\zeta\in C_n(M)$ an {\it s-triangulation} if $\zeta$ represents the fundamental class $[M,\partial M]$. Let $[M]$ denote the set of s-triangulations. This is consistent with the notion of the fundamental class of $M$ when $M$ is closed.

Viewing $\emptyset$ as an $n$-manifold, put $[\emptyset]=\{0\}$.

Given $\zeta_i\in[M_i],i=1,\ldots,r$, the meaning of $\zeta_1\sqcup\ldots\sqcup\zeta_r\in[M_1\sqcup\ldots\sqcup M_r]$ is self-evident.
\end{defn}

Let $Y$ be a closed surface. For $\tau\in\mathfrak{B}(Y)$, define $F(Y,\tau)$ to be the set of the functions $f:[Y]\to A$ such that $f(\xi_1)-f(\xi_2)=\alpha(\zeta\langle\tau\rangle)$ for any $\xi_1,\xi_2\in[Y]$ and $\zeta\in C_3(Y)$ with $\partial\zeta=\xi_1-\xi_2$.
Notice two facts: (i) for any $\xi_1,\xi_2\in[Y]$, since $\partial(\xi_1-\xi_2)=0$ and $H_3(Y;\mathbb{Z})=0$, one can find $\zeta$ such that $\partial\zeta=\xi_1-\xi_2$; (ii) if $\partial\zeta_1=\partial\zeta_2=\xi_1-\xi_2$, then $\alpha(\zeta_1\langle\tau\rangle)=\alpha(\zeta_2\langle\tau\rangle)$, as $\partial(\zeta_1\langle\tau\rangle-\zeta_2\langle\tau\rangle)=0$ and $\alpha$ is a cocycle.
Hence the action $A\times F(Y,\tau)\to F(Y,\tau)$ given by
$$(a,f)\mapsto a.f, \qquad \text{with} \quad (a.f)(\xi)=a+f(\xi)$$
is free and transitive, so $F(Y,\tau)$ is an {\it $A$-torsor}, as is called in the literature.

For a closed 3-manifold $X$, take an arbitrary $\zeta\in [X]$, and put $F(X,\rho)=\alpha(\zeta\langle\rho\rangle)$ for any $\rho\in\mathfrak{B}(X)$. Clearly,
this is in consistence with (\ref{eq:def-F}).

Now consider a general 3-manifold $X$. Write $X=X^{\rm cl}\sqcup X'$, where $X^{\rm cl}$ is closed, and each connected component of $X'$ has nonempty boundary. Then $\partial X=\partial X'$. Given $\rho\in\mathfrak{B}(X)$, let $\rho^{\rm cl}=\rho|_{X^{\rm cl}}$ and $\rho'=\rho|_{X'}$.
For each $\xi\in[\partial X']$, define $F(X',\rho')(\xi)$ to be $\alpha(\zeta\langle\rho'\rangle)\in A$ for any $\zeta\in[X']$ (so that $\partial\zeta=\xi$). This is well-defined, thanks to the following:
(i) such a $\zeta$ always exists;
(ii) if $\partial\zeta_1=\partial\zeta_2=\xi$, then $\partial(\zeta_1-\zeta_2)=0$, and since $H_3(X';\mathbb{Z})=0$, we may find $\eta\in C_4(X')$ with $\partial\eta=\zeta_1-\zeta_2$, so $\alpha(\zeta_1\langle\rho'\rangle)=\alpha(\zeta_2\langle\rho'\rangle)$.
Moreover, if $\xi_1-\xi_2=\partial\zeta$, then taking $\zeta_2$ with $\partial\zeta_2=\xi_2$ and putting $\zeta_1=\zeta+\zeta_2$, we have
$$F(X',\rho')(\xi_1)-F(X',\rho')(\xi_2)=\alpha(\zeta_1\langle\rho'\rangle)-\alpha(\zeta_2\langle\rho'\rangle)
=\alpha(\zeta\langle\rho'\rangle).$$
Hence indeed $F(X',\rho')\in F(\partial X',\partial\rho')$, with $\partial\rho'=\rho'|_{\partial X'}$.
Define
$$F(X,\rho)=F(X^{\rm cl},\rho^{\rm cl}).F(X',\rho')\in F(\partial X',\partial\rho')=F(\partial X,\partial\rho).$$

\begin{rmk}
\rm We highlight that to determine $F(X,\rho)$, it suffices to take an arbitrary $\xi_0\in[\partial X]$ and specify the value $F(X,\rho)(\xi_0)\in A$. Then the value of $F(X,\rho):[\partial X]\to A$ at any $\xi$ is given by $F(X,\rho)(\xi)=\alpha(\zeta)+F(X,\rho)(\xi_0)$, for an arbitrary $\zeta$ with $\partial\zeta=\xi-\xi_0$.
\end{rmk}

\begin{lem}[Gluing law] \label{lem:glue}
Suppose $\partial X=Y'\sqcup Y_1\sqcup -Y_2$, and $\varphi:Y_1\stackrel{\approx}\longrightarrow Y_2$ is an orientation-preserving homeomorphism. Let $X^\varphi=X/\{y\sim\varphi(y), y\in Y_1\}$, i.e. the manifold obtained from gluing $X$ along $\varphi$, so that $\partial X^\varphi=Y'$;
let ${\rm gl}_\varphi:X\to X^\varphi$ denote the quotient map.
Given $\rho\in\mathfrak{B}(X^\varphi)$, let $\tilde{\rho}=({\rm gl}_\varphi)^\ast(\rho)$, then for any $\xi\in[Y_1]$ and $\xi'\in[Y']$, one has
$$F(X^\varphi,\rho)(\xi')=F(X,\tilde{\rho})(\xi'\sqcup\xi\sqcup-\varphi_\#(\xi)).$$
\end{lem}
\begin{proof}
Take $\zeta\in C_3(X)$ with $\partial\zeta=\xi'\sqcup\xi\sqcup-\varphi_\#(\xi)$, then
$({\rm gl}_\varphi)_\#(\zeta)\in[X^\varphi]$, so
$$F(X^\varphi,\rho)(\xi')=\alpha(\zeta\langle\rho\rangle)=F(X,\tilde{\rho})(\xi'\sqcup\xi\sqcup-\varphi_\#(\xi)).$$
\end{proof}

We must also allow manifolds to have corners. In this paper, 3-manifolds with corners $X$ are viewed as ordinary 3-manifolds together with a piece of information encoding how to decompose $\partial X$ into subsurfaces along circles. For $\rho\in\mathfrak{B}(X)$, we simply define $F(X,\rho)\in F(\partial X,\partial\rho)$ as above, temporarily forgetting that $X$ has corners. A gluing rule in this more general context can be established.
But it is such an easy task that we choose not to explicitly write down.

\subsection{Fundamental cycle}

Motivated by \cite{No20,Ku17,Zi09}, we introduce a universal notion.

Given a topological space $X$, define an equivalence relation on $C_k(X)$ by declaring that two singular $k$-simplices $\sigma, \sigma'$ are equivalent if $\sigma_{ij}=\sigma'_{ij}$ in $\Pi_1(X)$ for all $i,j$, and extending linearly. Let $\overline{C}_k(X)$ denote the set of equivalence classes. Clearly $\partial_k:C_k(X)\to C_{k-1}(X)$ descends to a map $\overline{\partial}_k:\overline{C}_k(X)\to\overline{C}_{k-1}(X)$.
Let $\mathcal{Y}_X$ denote the composite
$$\mathcal{Y}_X:C_3(X)\twoheadrightarrow \overline{C}_3(X)\twoheadrightarrow\overline{C}_3(X)/{\rm Im}(\overline{\partial}_4)=:\grave{C}(X).$$

For a 3-manifold $X$, define $F_X:[\partial X]\to \grave{C}(X)$ by sending $\xi\in[\partial X]$ to $\mathcal{Y}_X(\eta)$, for any $\eta\in[X]$ with $\partial\eta=\xi$.
This is well-defined: such an $\eta$ always exists; if also $\partial\eta'=\xi$, then $\eta'-\eta$ represents $0\in H_3(X)$, so that $\eta'-\eta=\partial\mu$ for some $\mu\in C_4(X)$, hence $\mathcal{Y}_X(\eta)=\mathcal{Y}_X(\eta')$.
Call the map $F_X$ the {\it fundamental cycle} of $X$.
\begin{rmk}
\rm Let $\iota:\partial X\hookrightarrow X$ be the inclusion.
Straightforward is the property that $F_X(\xi')-F_X(\xi)=\mathcal{Y}(\iota_\#(\zeta))$
for any $\xi,\xi'\in[\partial X]$ and $\zeta\in C_3(\partial X)$ with $\partial\zeta=\xi'-\xi$.
\end{rmk}

For any $\rho\in\mathfrak{B}(X)$ and any $\alpha\in Z^3(B\Gamma;A)$, abusing the notation, there is an induced map $(B\rho)_\#:\grave{C}(X)\to C_3(B\Gamma)/{\rm Im}(\partial_4)$ such that the composite
\begin{align*}
[\partial X]\stackrel{F_X}\longrightarrow \grave{C}(X)\stackrel{(B\rho)_\#}\longrightarrow C_3(B\Gamma)/{\rm Im}\partial_4\stackrel{\alpha}\longrightarrow A
\end{align*}
equals $F(X,\rho)$ defined in the previous subsection.

\subsection{Cycle-cocycle calculus}

The name was given in \cite{Ch16}, to mean the following procedure: given $\tau\in\mathfrak{B}(Y)$ and $\xi,\xi'\in[Y]$, to find $\zeta\in C_3(Y)$ with $\partial\zeta=\xi-\xi'$ and compute $\omega^\tau(\xi;\xi'):=\alpha(\zeta\langle\tau\rangle)$, which is independent of the choice of $\zeta$.
We abbreviate $\omega^\tau(\xi;\xi')$ to $\omega(\xi;\xi')$ when $\tau$ is clear.

\begin{conv}
\rm From now on, we assume that $\alpha$ is {\it strongly normalized}, meaning $\alpha(x,y,z)=0$ whenever $1\in\{x,y,z,xy,yz\}$, as defined in \cite{HN12} Section 6.1.
\end{conv}

\begin{rmk}
\rm Such an $\alpha$ can be found at least for the Chern-Simons invariant over ${\rm SL}(2,\mathbb{C})$ (see \cite{HN12} Lemma 6.4), and we expect similar results for more Lie groups.

If $\alpha$ is not strongly normalized, then the formulas below still exist, but will be more complicated. We may develop these in future work.
\end{rmk}

\begin{nota}
\rm For $\mu,\mu\in C_2(Y)$, suppose there exists $\zeta\in C_3(Y)$ with $\partial\zeta=\mu-\mu'$. Denote $\mu\doteq\mu'$ (resp. $\mu\equiv\mu'$) if $\alpha(\zeta\langle\tau\rangle)=0$ for all $\tau\in\mathfrak{B}(Y)$ and all normalized (resp. strongly normalized) $\alpha$.
\end{nota}

\begin{conv} \label{conv:S}
\rm From now on, for each manifold $X$, write $\mathfrak{B}^0(X)$ instead of $\mathfrak{B}^S(X)$, after a finite set $S$ is chosen.

For a planar surface, let $S$ consist of one vertex from each component of $\partial Y$. For torus $\mathbb{T}^2=S^1\times S^1$, let $S=\{1\times 1\}$.
\end{conv}

In Fig. \ref{fig:notation} we introduce some pictorial notations which are used throughout this paper.

\begin{figure}[h]
  \centering
  \includegraphics[width=0.7\textwidth]{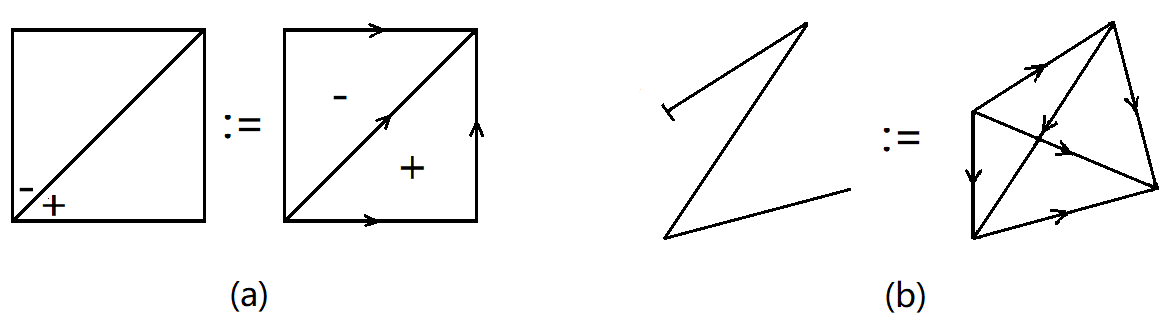}\\
  \caption{(a) An s-triangulation for a square; (b) a 3-simplex}\label{fig:notation}
\end{figure}

Since $\partial[ABBB]=[BBB]-[ABB]$, we have $[ABB]\doteq [BBB]$; similarly, $[BAB]\doteq [BBA]\doteq [BBB]$. It then follows from $\partial[ABCB]=[BCB]-[ACB]+[ABB]-[ABC]$ that
\begin{align}
[ACB]\equiv -[ABC].    \label{eq:inverse-triangle}
\end{align}
Consequently,
\begin{align}
([ABC]-[ADC])-([DAB]-[DCB])\equiv \partial[DABC],  \label{eq:calculus-square}
\end{align}
as illustrated by Fig. \ref{fig:calculus-square}.

\begin{figure}[h]
  \centering
  \includegraphics[width=0.4\textwidth]{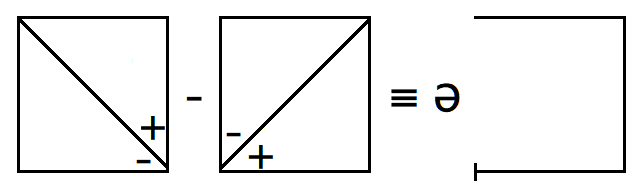}\\
  \caption{The identity (\ref{eq:calculus-square})}\label{fig:calculus-square}
\end{figure}

The assumption that $\alpha$ is strongly normalized implies the following:
\begin{align}
&\alpha(x,y,(xy)^{-1})=0; \label{eq:identity1}  \\
&\alpha(x,y,z)=\alpha((xyz)^{-1},x,y)=\alpha(xyz,(yz)^{-1},z)=\alpha(xy,z,(yz)^{-1})=-\alpha(x,yz,z^{-1});   \label{eq:identity2}
\end{align}
the second line is illustrated in Fig. \ref{fig:identity}.
These are easy to deduce and will be used implicitly from now on.

\begin{figure}[h]
  \centering
  \includegraphics[width=0.6\textwidth]{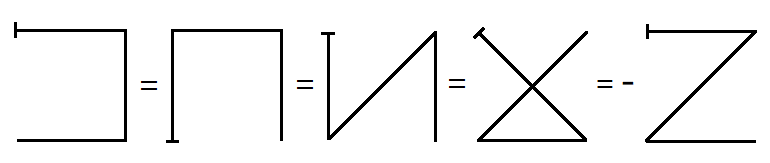}\\
  \caption{The reason for (\ref{eq:identity2})}\label{fig:identity}
\end{figure}

\subsubsection{Pair of pants $P=\Sigma_{0,3}$}

\begin{figure}[h]
  \centering
  \includegraphics[width=0.6\textwidth]{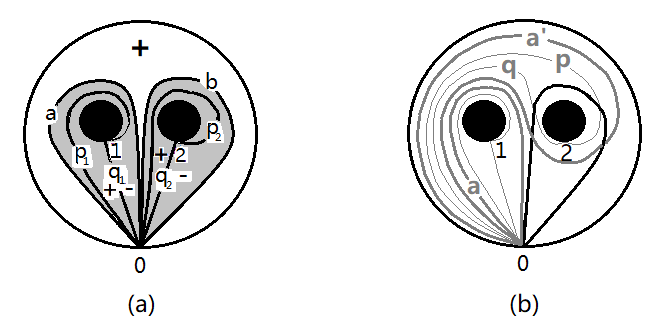}\\
  \caption{(a) The standard s-triangulation $\xi^{\rm st}_P$ of the pair of pants; (b) depicted in gray is the image of $\xi^{\rm st}_P$ under the twist $\varphi$} \label{fig:P}
\end{figure}

Let $[1^\frown]$ (resp. $[2^\frown]$) denote the clockwise loop from $1$ (resp. $2$) to itself. The other notations in the following lines deserve no explanation.
Let
\begin{align}
\xi^{\rm st}_P=[0\stackrel{q_1}\frown1^\frown]-[0\stackrel{a}{\frown}0\stackrel{q_1}\frown1]+[0\stackrel{q_2}\frown2^\frown]
-[0\stackrel{b}{\frown}0\stackrel{q_2}\frown2]+[0\stackrel{a}{\frown}0\stackrel{b}{\frown}0]. \label{eq:xi-P}
\end{align}
Let $\varphi:P\to P$ denote the clockwise twist, under which the two holes are interchanged.
The transformed s-triangulation is found to be
$$\varphi_\#\xi^{\rm st}_P=[0\stackrel{q}{\frown}2^\frown]-[0\stackrel{a'}{\frown}0\stackrel{q}{\frown}2]
+[0\stackrel{a'}{\frown}0\stackrel{a}{\frown}0]+[0\stackrel{q_1}\frown1^\frown]
-[0\stackrel{a}{\frown}0\stackrel{q_1}\frown1],$$
as shown in Fig. \ref{fig:P} (b). Then
\begin{align*}
\varphi_\#\xi^{\rm st}_P-\xi_P^{\rm st}&=
\partial\big([0\stackrel{q}{\frown}2\stackrel{q_2}{\frown}0\stackrel{p_2}{\frown}2]
-[0\stackrel{a}{\frown}0\stackrel{p_2}{\frown}2\stackrel{q_2}{\frown}0]
+[0\stackrel{a'}{\frown}0\stackrel{q}{\frown}2\stackrel{q_2}{\frown}0]    \\
&\ \ \ \ \ \ \ -[0\stackrel{p_2}{\frown}2\stackrel{q_2}{\frown}0\stackrel{q_2}{\frown}2]
-[0\stackrel{q_2}{\frown}2\stackrel{q_2}{\frown}0\stackrel{p_2}{\frown}2]
+[0\stackrel{q_2}{\frown}2\stackrel{q_2}{\frown}0\stackrel{q_2}{\frown}2]\big)   \\
&\equiv\partial\big([0\stackrel{q}{\frown}2\stackrel{q_2}{\frown}0\stackrel{p_2}{\frown}2]
-[0\stackrel{a}{\frown}0\stackrel{p_2}{\frown}2\stackrel{q_2}{\frown}0]
+[0\stackrel{a'}{\frown}0\stackrel{q}{\frown}2\stackrel{q_2}{\frown}0]\big).
\end{align*}
Let $\tau_{y_1,y_2}\in\mathfrak{B}^0(P)$ be the one given by $[1\stackrel{q_1}\frown 0]\mapsto 1$, $[2\stackrel{q_2}\frown 0]\mapsto y_1$ and $[j^\frown]\mapsto y_j$ for $j=1,2$, then
\begin{align}
\omega^{\tau_{y_1,y_2}}\big(\varphi_\#\xi^{\rm st}_P;\xi_P^{\rm st}\big)=-\alpha(y_1,y_1^{-1}y_2,y_1). \label{eq:cocycle-P}
\end{align}

\subsubsection{Disk with at least 3 holes removed}

Let $\xi_1,\xi_2$ be the two s-triangulations shown in Fig. \ref{fig:associator} (a), where the inner parts in light gray are similar as those in Fig. \ref{fig:P} (a).

For $x_1,x_2,x_3\in\Gamma$, let $\tau_{x_1,x_2,x_3}\in\mathfrak{B}^0(\Sigma_{0,4})$ send $[j^\frown]$ to $x_j$ and $[j\frown 0]$ to $1$ for $j=1,2,3$. Since
\begin{align*}
\xi_2-\xi_1=[0\stackrel{a}{\frown}0\stackrel{d_2}{\frown}0]+[0\stackrel{b}{\frown}0\stackrel{c}{\frown}0]
-[0\stackrel{d_1}{\frown}0\stackrel{c}{\frown}0]-[0\stackrel{a}{\frown}0\stackrel{b}{\frown}0] =\partial[0\stackrel{a}{\frown}0\stackrel{b}{\frown}0\stackrel{c}{\frown}0],
\end{align*}
we have
\begin{align}
\omega^{\tau_{x_1,x_2,x_3}}(\xi_2;\xi_1)=\alpha(x_1,x_2,x_3).  \label{eq:associator0}
\end{align}

\begin{figure}[h]
  \centering
  \includegraphics[width=0.75\textwidth]{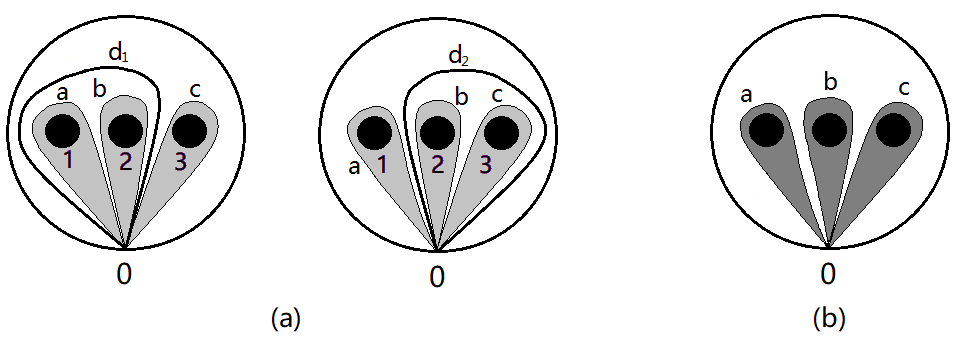}\\
  \caption{(a) $\xi_1,\xi_2\in[\Sigma_{0,4}]$; (b) the white part is $[0\stackrel{a}{\frown}0\stackrel{b}{\frown}0\stackrel{c}{\frown}0]$}\label{fig:associator}
\end{figure}

In this manner, if $r\ge 3$, then an s-triangulation of $\Sigma_{0,r+1}$ corresponds to a vertex in the associahedron $K_r$ (c.f. \cite{CSZ15}), and given two such s-triangulations $\xi,\xi'$, we may find $\zeta\in C_3(\Sigma_{0,r+1})$ with $\partial\zeta=\xi-\xi'$, which corresponds to a path in $K_r$. Then, letting $\tau_{x_1,\ldots,x_r}\in\mathfrak{B}^0(\Sigma_{0,r+1})$ send $[j^\frown]$ to $x_j$ and $[j\frown 0]$ to $1$ for $j=1,\ldots,r$, we can compute $\omega^{\tau_{x_1,\ldots,x_r}}(\xi;\xi')$ by successively using (\ref{eq:associator0}).
Call this value an {\it associator}; it is independent of the choice of $\zeta$. 

\subsubsection{Cylinder $C$}

We often draw a cylinder as a rectangle with a pair of opposite edges in double lines, to indicate that they are to be identified.

Let $\tilde{\xi}^n_C,\xi^n_C,\tilde{\xi}^{\rm st}_C,\xi^{\rm st}_C$ respectively denote the four s-triangulations in Fig. \ref{fig:equivalence} from left to right.
As an immediate consequence of (\ref{eq:inverse-triangle}), we have equivalences
\begin{align}
\tilde{\xi}^n_C\equiv\xi^n_C,  \qquad \tilde{\xi}^{\rm st}_C\equiv\xi^{\rm st}_C.   \label{eq:equivalence}
\end{align}

\begin{figure}[h]
  \centering
  \includegraphics[width=0.6\textwidth]{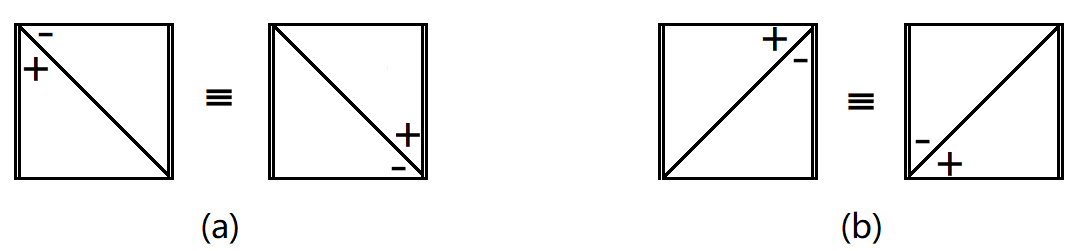}\\
  \caption{(a) $\tilde{\xi}^n_C\equiv\xi^n_C$; (b) $\tilde{\xi}^{\rm st}_C\equiv\xi^{\rm st}_C$} \label{fig:equivalence}
\end{figure}

Two cylinders can be glued into a new one: $C_1\cup C_2=C$, and a 3-chain $\zeta(C_1,C_2)$ whose boundary is
$\xi^{\rm st}_{C_1}+\xi^{\rm st}_{C_2}-\xi^{\rm st}_{C}$ can be found via Rule (I) shown in Fig. \ref{fig:difference}.

\begin{figure}[h]
  \centering
  \includegraphics[width=0.7\textwidth]{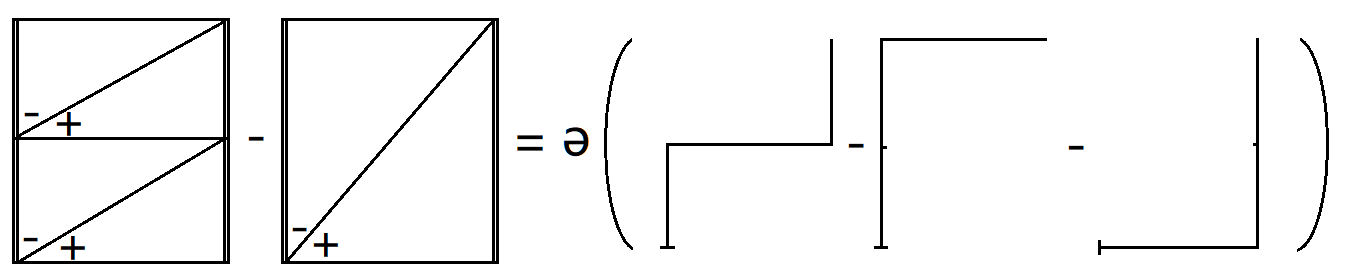}\\
  \caption{Rule (I): $\xi^{\rm st}_{C_1}+\xi^{\rm st}_{C_2}-\xi^{\rm st}_{C}=\partial\zeta(C_1,C_2)$}\label{fig:difference}
\end{figure}

\subsubsection{Torus $\mathbb{T}^2$}

Let $\pi:\mathbb{R}^2\to S^1\times S^1=\mathbb{T}^2$ be the universal covering. For $A_i\in\mathbb{Z}^2\subset\mathbb{R}^2$, $i=0,1,2$, use $[A_0A_1A_2]$ to denote the singular 2-simplex $\Delta^2\stackrel{\kappa}\to\mathbb{R}^2\stackrel{\pi}\to\mathbb{T}^2$, where $\kappa$ is the map extending $v_i\mapsto A_i$ linearly.
Similarly for singular 3-simplices in $\mathbb{T}^2$.

Write elements of $\mathbb{R}^2$ as column vectors. Let
\begin{align}
\xi^{\rm st}_{\mathbb{T}^2}&=[OW_1W_3]-[OW_2W_3]\in[\mathbb{T}^2], \\
\text{with} \qquad O&=\left(\begin{array}{cc} 0 \\ 0 \end{array}\right), \qquad
W_1=\left(\begin{array}{cc} 1 \\ 0 \end{array}\right), \qquad
W_2=\left(\begin{array}{cc} 0 \\ 1 \end{array}\right), \qquad
W_3=\left(\begin{array}{cc} 1 \\ 1 \end{array}\right).
\end{align}

Recall the well-known fact that the mapping class group $\mathcal{M}_1$ of $\mathbb{T}^2$ is isomorphic to ${\rm SL}(2,\mathbb{Z})$, which is generated by
$$\mathbf{s}=\left(\begin{array}{cc} 0 & 1 \\ -1 & 0 \end{array}\right), \qquad
\mathbf{t}=\left(\begin{array}{cc} 1 & 1 \\ 0 & 1 \end{array}\right).$$
For $\mathbf{a}\in{\rm SL}(2,\mathbb{Z})$, let $\hat{\mathbf{a}}:\mathbb{T}^2\to\mathbb{T}^2$ denote the homeomorphism induced by the left multiplication $\mathbf{a}:\mathbb{R}^2\to\mathbb{R}^2$. Then $\mathbf{a}\mapsto\hat{\mathbf{a}}$ sets up an isomorphism of ${\rm SL}(2,\mathbb{Z})$ onto $\mathcal{M}_1$.

For each $z\in\Gamma$, the map $\mathbb{Z}^3\to A$, $(a,b,c)\mapsto\alpha(z^a,z^b,z^c)$
is a 3-cocycle of $\mathbb{Z}$, so, due to $H^3(\mathbb{Z};A)=0$, there exists $f_z:\mathbb{Z}^2\to A$ such that
\begin{align}
\alpha(z^a,z^b,z^c)=f_z(b,c)-f_z(a+b,c)+f_z(a,b+c)-f_z(a,b)  \label{eq:fz}
\end{align}
for all $a,b,c\in\mathbb{Z}$.
Put
\begin{align}
\epsilon(z;a,b)=f_z(a,b)-f_z(b,a);
\end{align}
clearly, it is independent of the choice of $f_z$.

Setting $c=a$ in (\ref{eq:fz}), we obtain
\begin{align}
\alpha(z^a,z^b,z^a)=\epsilon(z;a,a+b)-\epsilon(z;a,b). \label{eq:alpha-epsilon}
\end{align}
Furthermore, in (\ref{eq:fz}), the case $b=0$ implies $f_z(a,0)=f_z(0,c)$, the case $c=-b=a$ implies $f_z(a,-a)=f_z(-a,a)$, and
\begin{align*}
a=u,\ b=-v,\ c=v\Rightarrow f_z(-v,v)-f_z(u-v,v)+f_z(u,0)-f_z(u,-v)=0, \\
a=v,\ b=-v,\ c=u\Rightarrow f_z(-v,u)-f_z(0,u)+f_z(v,u-v)-f_z(v,-v)=0.
\end{align*}
so that $\epsilon(z;u-v,v)=\epsilon(z;-v,u)$.
Hence
\begin{align}
\epsilon(z;a,b)=\epsilon(z;a+b,-a)=\epsilon(z;b,-a-b)=\epsilon(z;-a,-b). \label{eq:epsilon}
\end{align}


\begin{lem}
Let $\phi_z\in\mathfrak{B}^0(\mathbb{T}^2)$ be determined by $S^1\times 1\to 1$ and $1\times S^1\to z$. Then
\begin{align}
\omega^{\phi_z}\big(\hat{\mathbf{a}}_\#\xi^{\rm st}_{\mathbb{T}^2};\xi^{\rm st}_{\mathbb{T}^2}\big)
=\epsilon(z;c,d) \qquad \text{if} \quad \mathbf{a}=\left(\begin{array}{cc} a & b \\ c & d \end{array}\right).  \label{eq:genus1}
\end{align}
\end{lem}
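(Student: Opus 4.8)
The plan is to exploit the fact that $\phi_z$ takes values in the cyclic subgroup $\langle z\rangle$, so that the whole computation localizes to the group cohomology of $\mathbb{Z}$, where $\alpha$ pulls back to the explicit coboundary $\delta f_z$ of (\ref{eq:fz}). The decisive advantage is that this lets me evaluate $\alpha^{\phi_z}$ \emph{without} ever constructing a 3-chain $\zeta$ with $\partial\zeta=\hat{\mathbf{a}}_\#\xi^{\rm st}_{\mathbb{T}^2}-\xi^{\rm st}_{\mathbb{T}^2}$: since the pulled-back cocycle is a coboundary, the value of $\alpha(\zeta\langle\phi_z\rangle)$ depends only on the already-known boundary $\partial\zeta$, not on $\zeta$ itself.

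Concretely, first I would factor $\phi_z=\iota_z\circ h$, where $h:\Pi_1(\mathbb{T}^2)\to\mathbb{Z}$ sends a path to the second coordinate of its displacement in $\mathbb{R}^2$ (so $S^1\times 1\mapsto 0$ and $1\times S^1\mapsto 1$), and $\iota_z:\mathbb{Z}\to\Gamma$ is $1\mapsto z$. Because the pushforward $\langle-\rangle$ is functorial and is a chain map, one gets $\zeta\langle\phi_z\rangle=\iota_z\big(\zeta\langle h\rangle\big)$, where on the right $\iota_z$ denotes the induced chain map on $C_\ast(B\mathbb{Z})$; hence $\alpha(\zeta\langle\phi_z\rangle)=(\iota_z^\ast\alpha)(\zeta\langle h\rangle)$. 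By the very definition of $f_z$, the 3-cochain $\iota_z^\ast\alpha$ on $\mathbb{Z}$ equals $\delta f_z$, so that $\alpha(\zeta\langle\phi_z\rangle)=f_z\big(\partial(\zeta\langle h\rangle)\big)=f_z\big((\partial\zeta)\langle h\rangle\big)=f_z\big((\hat{\mathbf{a}}_\#\xi^{\rm st}_{\mathbb{T}^2}-\xi^{\rm st}_{\mathbb{T}^2})\langle h\rangle\big)$, using that $\langle h\rangle$ commutes with $\partial$ and that $\partial\zeta=\hat{\mathbf{a}}_\#\xi^{\rm st}_{\mathbb{T}^2}-\xi^{\rm st}_{\mathbb{T}^2}$. (Existence of such a $\zeta$ follows since both terms represent $[\mathbb{T}^2]$, their difference being a boundary as $\det\mathbf{a}=1$.)

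It then remains a bookkeeping computation of the two pushed-forward 2-chains. Writing $\mathbf{a}W_1=(a,c)^{\top}$, $\mathbf{a}W_2=(b,d)^{\top}$, $\mathbf{a}W_3=(a+b,c+d)^{\top}$ and reading off the second coordinates of the edge displacements, I expect $(\hat{\mathbf{a}}_\#\xi^{\rm st}_{\mathbb{T}^2})\langle h\rangle=[c|d]-[d|c]$ and $\xi^{\rm st}_{\mathbb{T}^2}\langle h\rangle=[0|1]-[1|0]$. Applying $f_z$ and recalling $\epsilon(z;a,b)=f_z(a,b)-f_z(b,a)$ then gives $\alpha^{\phi_z}(\hat{\mathbf{a}}_\#\xi^{\rm st}_{\mathbb{T}^2};\xi^{\rm st}_{\mathbb{T}^2})=\epsilon(z;c,d)-\epsilon(z;0,1)$. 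Finally I would note $\epsilon(z;0,1)=0$: this is exactly the relation $f_z(a,0)=f_z(0,c)$ derived from the case $b=0$ of (\ref{eq:fz}) (strong normalization), which with $a=c=1$ yields $f_z(0,1)=f_z(1,0)$. This establishes (\ref{eq:genus1}).

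The point requiring care—rather than a genuine obstacle—is the bookkeeping of exponents: keeping straight that $\phi_z$ records the \emph{second} coordinate, that $\hat{\mathbf{a}}$ acts by left multiplication so the two triangles of $\hat{\mathbf{a}}_\#\xi^{\rm st}_{\mathbb{T}^2}$ contribute $[c|d]$ and $[d|c]$ in the correct order, and verifying the chain-map property of $\langle-\rangle$ in the relevant dimension. Once the coboundary trick is in place, everything else is routine; notably, it lets me bypass the associahedron-style cycle-cocycle calculus used elsewhere in Section 2, since no explicit filling $\zeta$ need be produced.
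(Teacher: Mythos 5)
Your proof is correct, but it takes a genuinely different route from the paper's. The paper proves (\ref{eq:genus1}) by induction over ${\rm SL}(2,\mathbb{Z})$: it checks the identity matrix and then propagates the formula under right multiplication by $\mathbf{s}^{\pm1}$, $\mathbf{t}^{\pm1}$, each step requiring an explicit 3-chain filling in the universal cover (e.g.\ $\partial[OBAC]$) followed by the relations (\ref{eq:alpha-epsilon}) and (\ref{eq:epsilon}). You instead factor $\phi_z$ through $\mathbb{Z}$, observe that the pulled-back cocycle is precisely the coboundary $\delta f_z$ of (\ref{eq:fz}), and conclude that $\alpha(\zeta\langle\phi_z\rangle)=f_z\big((\partial\zeta)\langle h\rangle\big)$ depends only on the prescribed boundary; your bookkeeping $(\hat{\mathbf{a}}_\#\xi^{\rm st}_{\mathbb{T}^2})\langle h\rangle=[c|d]-[d|c]$ and $\xi^{\rm st}_{\mathbb{T}^2}\langle h\rangle=[0|1]-[1|0]$ is right, and so is the final step $\epsilon(z;0,1)=0$ via $f_z(1,0)=f_z(0,1)$ --- a step that genuinely needs the normalization of $\alpha$, since (\ref{eq:epsilon}) alone only yields $2\,\epsilon(z;0,1)=0$, which is insufficient when $A$ has 2-torsion. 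Your argument is shorter, avoids the induction and the relations (\ref{eq:epsilon}) entirely, gives independence of the choice of $\zeta$ for free, and explains conceptually why only the bottom row $(c,d)$ of $\mathbf{a}$ enters; what the paper's inductive proof buys in exchange is a stock of explicit fillings in the style of the cycle-cocycle calculus that the rest of the paper (Rules (I), (II), the associators) is built on. One small point to tighten: your existence claim for $\zeta$ (``both terms represent $[\mathbb{T}^2]$, so their difference bounds'') produces a filling by arbitrary singular simplices, whereas $\zeta\langle\phi_z\rangle$ is only defined when all vertices of $\zeta$ sit at the distinguished basepoint; this is repaired by the standard fact that singular chains with vertices at a basepoint compute the same homology (and your formula is then automatically independent of the admissible filling chosen). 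Since the paper's own Notation for $\alpha^\tau(\xi;\xi')$ operates at exactly this level of rigor, this is a cosmetic rather than a genuine gap.
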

\begin{proof}
The result is trivial when $\mathbf{a}$ is the identity matrix. The proof proceeds as showing: if (\ref{eq:genus1}) is true for $\mathbf{a}=\mathbf{b}$, then it is also true for
$$\mathbf{a}\mathbf{s}=\left(\begin{array}{cc} -b & a \\ -d & c \end{array}\right), \quad
\mathbf{a}\mathbf{s}^{-1}=\left(\begin{array}{cc} b & -a \\ d & -c \end{array}\right), \quad
\mathbf{a}\mathbf{t}=\left(\begin{array}{cc} a & a+b \\ c & c+d \end{array}\right), \quad
\mathbf{a}\mathbf{t}^{-1}=\left(\begin{array}{cc} a & b-a \\ c & d-c \end{array}\right).$$

\begin{figure}[h]
  \centering
  \includegraphics[width=0.55\textwidth]{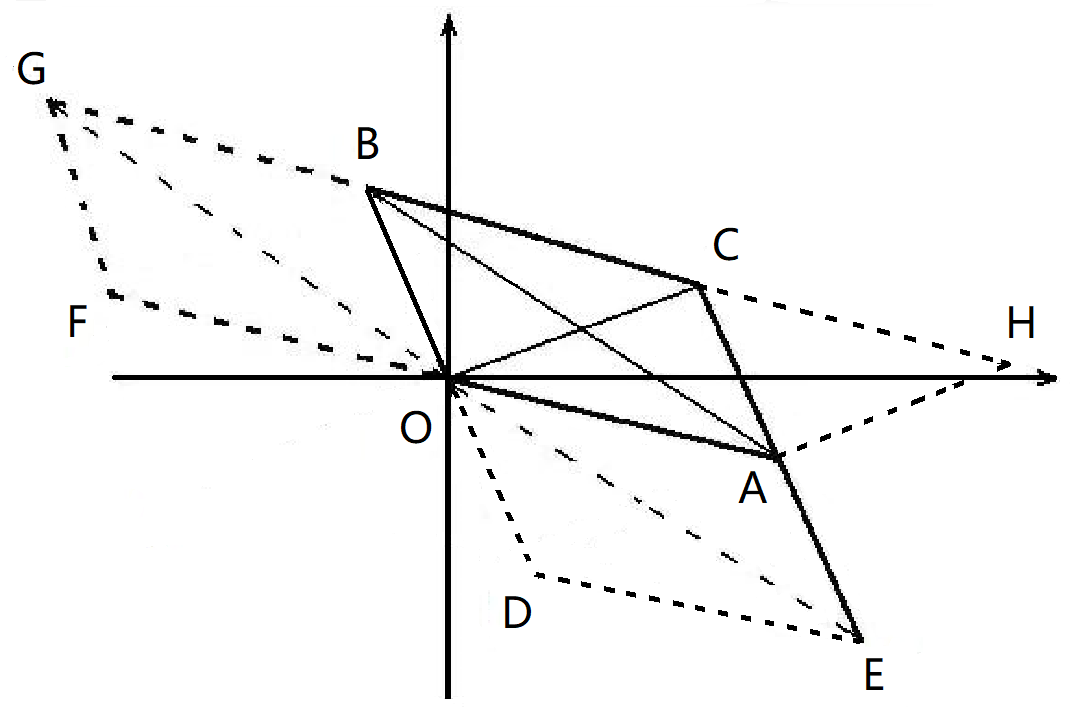}\\
  \caption{Cycle-cocycle calculus in genus 1}\label{fig:calculus-genus1}
\end{figure}

Let $A=(a,c), B=(b,d), C=(a+b,c+d)$. 
Assuming (\ref{eq:genus1}), we have (as in Fig. \ref{fig:calculus-genus1})
\begin{align*}
\hat{\mathbf{a}}_{\#}\xi^{\rm st}_{\mathbb{T}^2}&=[OAC]-[OBC]=[OAC]-[ACH], \\
(\hat{\mathbf{a}}\hat{\mathbf{s}})_\#\xi^{\rm st}_{\mathbb{T}^2}&=[ODE]-[OAE]=[BOA]-[BCA], \\
(\hat{\mathbf{a}}\hat{\mathbf{s}}^{-1})_{\#}\xi^{\rm st}_{\mathbb{T}^2}&=[OBG]-[OFG]=[ACB]-[AOB], \\
(\hat{\mathbf{a}}\hat{\mathbf{t}})_{\#}\xi^{\rm st}_{\mathbb{T}^2}&=[OAH]-[OCH], \\
(\hat{\mathbf{a}}\hat{\mathbf{t}}^{-1})_{\#}\xi^{\rm st}_{\mathbb{T}^2}&=[OAB]-[OGB]=[OAB]-[ABC].
\end{align*}

Computing directly, one obtains
\begin{align*}
(\hat{\mathbf{a}}\hat{\mathbf{s}})_{\#}\xi^{\rm st}_{\mathbb{T}^2}-\hat{\mathbf{a}}_{\#}\xi^{\rm st}_{\mathbb{T}^2}
=[BOA]-[BCA]-[OAC]+[OBC]\doteq\partial[OBAC],
\end{align*}
hence $\omega^{\phi_z}\big((\hat{\mathbf{a}}\hat{\mathbf{s}})_{\#}\xi^{\rm st}_{\mathbb{T}^2};\hat{\mathbf{a}}_{\#}\xi^{\rm st}_{\mathbb{T}^2}\big)=\alpha(z^d,z^{c-d},z^d)$,
which together with the assumption implies
\begin{align*}
&\omega^{\phi_z}((\hat{\mathbf{a}}\hat{\mathbf{s}})_{\#}\xi^{\rm st}_{\mathbb{T}^2};\xi^{\rm st}_{\mathbb{T}^2})=
\omega^{\phi_z}\big((\hat{\mathbf{a}}\hat{\mathbf{s}})_{\#}\xi^{\rm st}_{\mathbb{T}^2};\hat{\mathbf{a}}_{\#}\xi^{\rm st}_{\mathbb{T}^2}\big)+\omega^{\phi_z}\big(\hat{\mathbf{a}}_{\#}\xi^{\rm st}_{\mathbb{T}^2};\xi^{\rm st}_{\mathbb{T}^2}\big) \\
=\ &\alpha(z^d,z^{c-d},z^d)+\epsilon(z;c,d)\stackrel{(\ref{eq:alpha-epsilon})}
=\epsilon(z;d,c)-\epsilon(z;d,c-d)+\epsilon(z;c,d)\stackrel{(\ref{eq:epsilon})}=\epsilon(z;-d,c).
\end{align*}
So (\ref{eq:genus1}) holds for $\mathbf{a}\mathbf{s}$.

Similarly, we can also prove (\ref{eq:genus1}) for $\mathbf{a}\mathbf{s}^{-1}$, $\mathbf{a}\mathbf{t}^{\pm1}$. The reader may consult the proof of Lemma 3.4 in \cite{Ch16}.
\end{proof}

\section{A practical method for computations}

The main results are Algorithm \ref{alg} for computing $F(M,\rho)$ when $M$ is a link complement, and the formula (\ref{eq:main}) for $F(M,\rho)$ when $M$ is presented as a surgery along a link.

\subsection{Link complements}

Let $L=\sqcup_{i=1}^nL_i\subset S^3$ be a link, where the $L_i$'s are connected components. Take a sufficiently large solid cylinder ${\rm SC}=D^2\times[0,1]$ containing the tubular neighborhood $\mathcal{N}(L)$, and let $E_L=S^3-\mathcal{N}(L)$, $E'_L={\rm SC}-\mathcal{N}(L)$, then $E_L=E'_L\cup D^3$, where $D^3\cong S^3-{\rm SC}$ does not affect anything below. Suppose $L$ is presented as a diagram $\mathcal{L}=\sqcup_{i=1}^n\mathcal{L}_i$, with $\mathcal{L}_i$ corresponding to $L_i$, and suppose a representation $\rho_c:\pi_1(E_L)\to\Gamma$ is given via a {\it coloring} $c:\mathfrak{D}_{\mathcal{L}}\to\Gamma$ (where $\mathfrak{D}_{\mathcal{L}}$ is the set of directed arcs) fitting the Wirtinger presentation for $\pi_1(E_L)$.

\begin{figure}[h]
  \centering
  \includegraphics[width=0.48\textwidth]{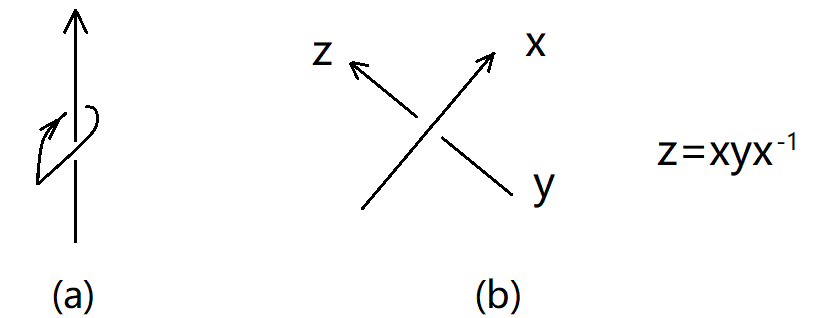}\\
  \caption{(a) each directed arc corresponds to an element of $\pi_1(E_L)$; (b) each crossing gives a relation}\label{fig:Wirtinger}
\end{figure}

\begin{conv}
\rm We adopt the ``over presentation" for $\pi_1(E_L)$ (see \cite{CF77} Chapter VI), as in Fig. \ref{fig:Wirtinger}.

Fix an orientation for $E_L$. For each $i$, fix an homeomorphism from $\mathbb{T}^2$ to the $\partial\mathcal{N}(L_i)$, and let $-\mathbb{T}^2_i$ stand for the $i$-th component of $\partial E_L$;
denote the image of $S^1\times 1$ by $\mathfrak{m}_i$ (the meridian), and denote that of $1\times S^1$ by $\mathfrak{l}_i$ (the longitude); label $\mathfrak{m}_i\cap\mathfrak{l}_i$ via a big dot on some arc of $\mathcal{L}_i$ and call it {\it basepoint}.
\end{conv}

\begin{figure}[h]
  \centering
  \includegraphics[width=0.9\textwidth]{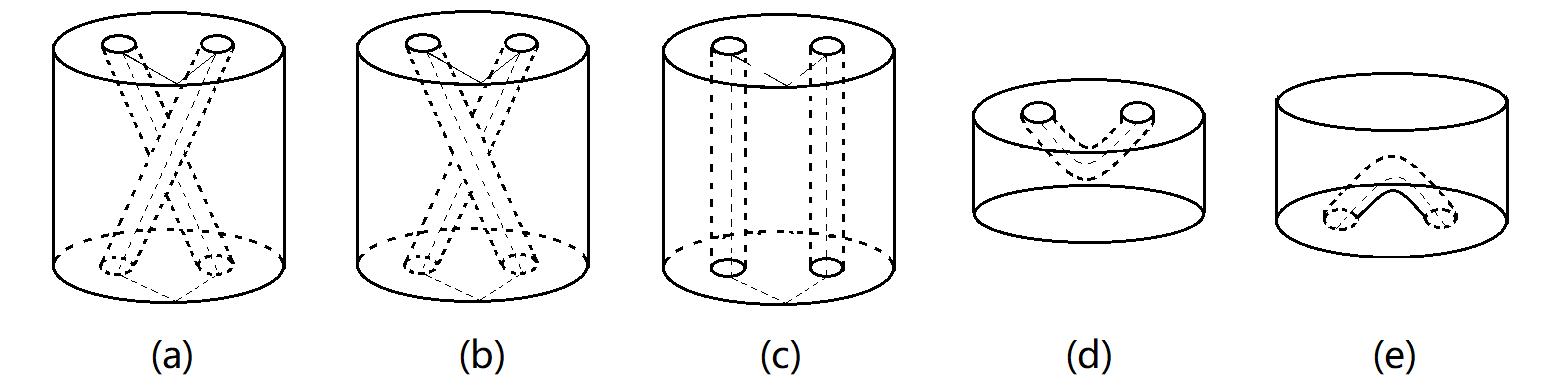}\\
  \caption{Basic pieces: (a) $Q(+)$; (b) $Q(-)$; (c) $Q(\|)=P\times [0,1]$; (d) $Q(\cup)$; (e) $Q(\cap)$} \label{fig:pieces}
\end{figure}

Use horizontal lines to cut $\mathcal{L}$ into simple pieces. The corresponding 3-dimensional picture is to use horizontal planes to decompose $E'_L$ into layers, each of which can be chopped into ``basic pieces" exhibited in Fig. \ref{fig:pieces}; note that $\Sigma_{0,k}\times[0,1]$ for $k\ge 4$ can be obtained by successively gluing $k-2$ copies of $Q(\|)$.
Recalling Convention \ref{conv:S}, choose for $E'_L$ the finite set consisting of the vertices, one for each circle.
From $c$ we can construct a representation (abusing the notation) $\rho_c\in\mathfrak{B}^0(E'_L)$ in an self-evident way, so that when restricted to $Q(+), Q(-)$ it is respectively in the form shown in Fig. \ref{fig:Qpm} (a),(c).

\begin{figure}[h]
  \centering
  \includegraphics[width=1.0\textwidth]{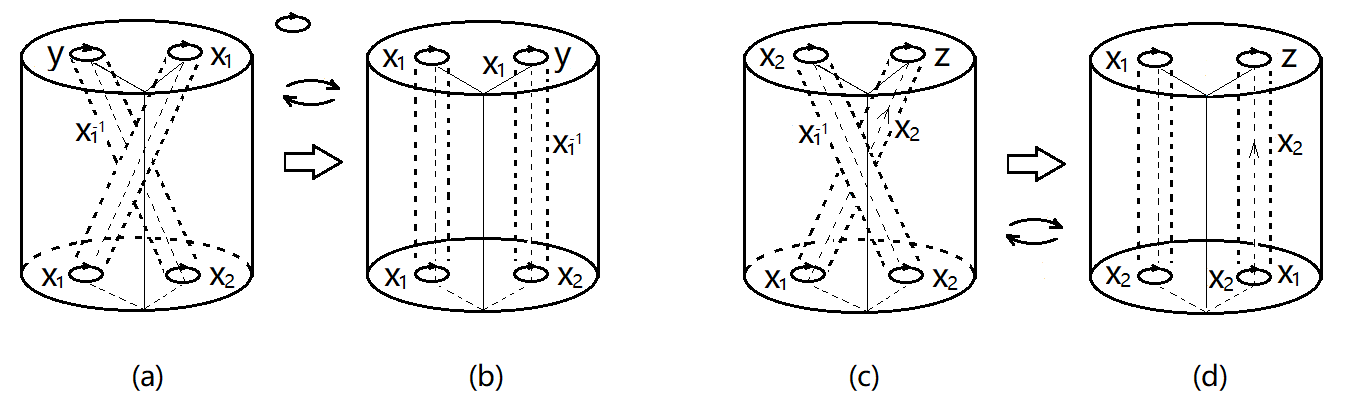}\\
  \caption{(a) $\rho^+_{x_1,x_2}\in\mathfrak{B}^0(Q(+))$, with $y=x_1x_2x_1^{-1}$; (b) $Q(+)$ can be transformed into $Q(\|)$ by a twist whose restriction on the upper boundary is $\varphi$; (c) $\rho^-_{x_1,x_2}\in\mathfrak{B}^0(Q(-))$, with $z=x_2^{-1}x_1x_2$; (d) $Q(-)$ can be transformed into $Q(\|)$ by a twist whose restriction on the lower boundary is $\varphi$}\label{fig:Qpm}
\end{figure}

For $Q(\pm)$, as parts of its boundary, let $P_{\rm u}$, $P_{\rm l}$ respectively denote the upper and lower pair of pants, let $C_{\rm o}$ denote the outer cylinder, and $C_{\rm i,1},C_{\rm i,2}$ the inner cylinders.

Let $\kappa^+_{x_1,x_2}\in\mathfrak{B}^0(Q(+))$ and $\varrho^+\in\mathfrak{B}^0(Q(\|))$ be determined by the assignments shown in Fig. \ref{fig:Qpm} (a), (b), respectively.
Then
\begin{align*}
&F(Q(+),\kappa^+_{x_1,x_2})(\xi^{\rm st}_{C_{\rm o}}+\xi^{\rm st}_{P_{\rm u}}-\xi^{\rm st}_{P_{\rm l}}-\xi^{\rm st}_{C_{\rm i,1}}-\xi^{\rm st}_{C_{\rm i,2}}) \\
=\ &F(Q(\|),\varrho^+)(\xi^{\rm st}_{C_{\rm o}}+\varphi_\#\xi^{\rm st}_{P_{\rm u}}-\xi^{\rm st}_{P_{\rm l}}-\xi^{\rm st}_{C_{\rm i,1}}-\xi^{\rm st}_{C_{\rm i,2}}) \\
=\ &F(Q(\|),\varrho^+)(\xi^{\rm st}_{C_{\rm o}}+\xi^{\rm st}_{P_{\rm u}}-\xi^{\rm st}_{P_{\rm l}}-\xi^{\rm st}_{C_{\rm i,1}}-\xi^{\rm st}_{C_{\rm i,2}})+\omega^{\tau_{x_1,y}}(\varphi_\#\xi^{\rm st}_{P_{\rm u}};\xi^{\rm st}_{P_{\rm u}})
=-\alpha(x_1,x_2x_1^{-1},x_1);
\end{align*}
in the last equality we use $\omega^{\tau_{x_1,y}}(\varphi_\#\xi^{\rm st}_{P_{\rm u}};\xi^{\rm st}_{P_{\rm u}})=-\alpha(x_1,x_2x_1^{-1},x_1)$ by (\ref{eq:cocycle-P}), and the equality
$$F(Q(\|),\varrho^+)(\xi^{\rm st}_{C_{\rm o}}+\xi^{\rm st}_{P_{\rm u}}-\xi^{\rm st}_{P_{\rm l}}-\xi^{\rm st}_{C_{\rm i,1}}-\xi^{\rm st}_{C_{\rm i,2}})=0$$
which is shown as follows: applying the ``prism operator" $\mathcal{P}$ (defined in \cite{Ha02} Page 112) to the homeomorphism $P_l\times I\to Q(\|)$, we get
$\partial\mathcal{P}(\xi^{\rm st}_{P_{\rm l}})=\xi^{\rm st}_{C_{\rm o}}+\xi^{\rm st}_{P_{\rm u}}-\xi^{\rm st}_{P_{\rm l}}-\xi^{\rm st}_{C_{\rm i,1}}-\xi^{\rm st}_{C_{\rm i,2}},$
and then
$$F(Q(\|),\varrho^+)(\xi^{\rm st}_{C_{\rm o}}+\xi^{\rm st}_{P_{\rm u}}-\xi^{\rm st}_{P_{\rm l}}-\xi^{\rm st}_{C_{\rm i,1}}-\xi^{\rm st}_{C_{\rm i,2}})=\alpha\big(\mathcal{P}(\xi^{\rm st}_{P_{\rm l}})\langle\varrho^+\rangle\big)=0.$$
Hence
\begin{align}
&F(Q(+),\kappa^+_{x_1,x_2})(\xi^{\rm st}_{C_{\rm o}}+\xi^{\rm st}_{P_{\rm u}}-\xi^{\rm st}_{P_{\rm l}}-\xi^{\rm st}_{C_{\rm i,1}}-\xi^{n}_{C_{\rm i,2}}) \nonumber \\
=\ &F(Q(+),\kappa^+_{x_1,x_2})(\xi^{\rm st}_{C_{\rm o}}+\xi^{\rm st}_{P_{\rm u}}-\xi^{\rm st}_{P_{\rm l}}-\xi^{\rm st}_{C_{\rm i,1}}-\xi^{\rm st}_{C_{\rm i,2}})+\omega(\xi^{\rm st}_{C_{\rm i,2}};\xi^{n}_{C_{\rm i,2}}) \nonumber \\
\stackrel{(\ref{eq:calculus-square})}=\ &-\alpha(x_1,x_2x_1^{-1},x_1)+\alpha(x_1,x_2x_1^{-1},x_1)=0. \label{eq:F(Q+)}
\end{align}

Let $\kappa^-_{x_1,x_2}\in\mathfrak{B}^0(Q(-))$ and $\varrho^-\in\mathfrak{B}^0(Q(\|))$ be determined by the assignments shown in Fig. \ref{fig:Qpm} (c), (d), respectively.
Then
\begin{align}
&F(Q(-),\kappa^-_{x_1,x_2})(\xi^{\rm st}_{C_{\rm o}}+\xi^{\rm st}_{P_{\rm u}}-\xi^{\rm st}_{P_{\rm l}}-\xi^{\rm st}_{C_{\rm i,1}}-\xi^{\rm st}_{C_{\rm i,2}}) \nonumber \\
=\ &F(Q(\|),\varrho^-)(\xi^{\rm st}_{C_{\rm o}}+\xi^{\rm st}_{P_{\rm u}}-\varphi_\#\xi^{\rm st}_{P_{\rm l}}-\xi^{\rm st}_{C_{\rm i,1}}-\xi^{\rm st}_{C_{\rm i,2}}) \nonumber \\
=\ &F(Q(\|),\varrho^-)(\xi^{\rm st}_{C_{\rm o}}+\xi^{\rm st}_{P_{\rm u}}-\xi^{\rm st}_{P_{\rm l}}-\xi^{\rm st}_{C_{\rm i,1}}
-\xi^{\rm st}_{C_{\rm i,2}})-\omega^{\tau_{x_2,x_1}}(\varphi_\#\xi^{\rm st}_{P_{\rm l}};\xi^{\rm st}_{P_{\rm l}}) \nonumber \\
=\ &\alpha(x_2^{-1},x_1,x_2)+\alpha(x_2,x_2^{-1}x_1,x_2)=0;   \label{eq:F(Q-)}
\end{align}
we have used $\omega^{\tau_{x_2,x_1}}(\varphi_\#\xi^{\rm st}_{P_{\rm l}};\xi^{\rm st}_{P_{\rm l}})=-\alpha(x_2,x_2^{-1}x_1,x_2)$ by (\ref{eq:cocycle-P}),
and the equality
$$F(Q(\|),\varrho^-)(\xi^{\rm st}_{C_{\rm o}}+\xi^{\rm st}_{P_{\rm u}}-\xi^{\rm st}_{P_{\rm l}}-\xi^{\rm st}_{C_{\rm i,1}}-\xi^{\rm st}_{C_{\rm i,2}})=\alpha(x_2^{-1},x_1,x_2)$$
which is obtained similarly as above, noting that the 3rd term in (\ref{eq:xi-P}) contributes $\alpha(x_2^{-1},x_1,x_2)$.

Furthermore, we can easily show that $Q(\cup)$ and $Q(\cap)$ contribute nothing; the details are omitted.

\begin{figure}[h]
  \centering
  \includegraphics[width=0.4\textwidth]{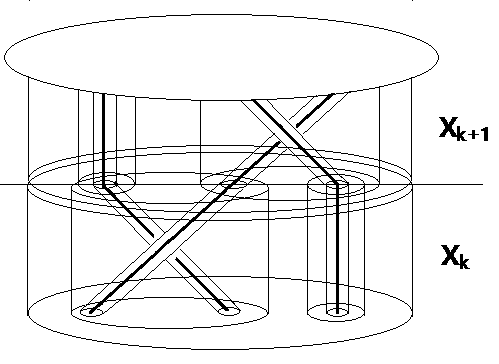}\\
  \caption{}\label{fig:associator0}
\end{figure}

Now $-\mathbb{T}_i^2$ appears to be glued from small cylinders, two for each crossing. However, the one arising from the overcrossing arc can be ``absorbed", due to Rule (I). Moreover, thanks to (\ref{eq:equivalence}), we are able to freely turn a square in a half-circle. Consequently, remembering (\ref{eq:F(Q+)}), (\ref{eq:F(Q-)}), we may find $\xi_{\mathcal{L},i}\in[-\mathbb{T}^2_i]$ by gluing small cylinders, one for each crossing according to Rule (II) (as presented in Fig. \ref{fig:crossing}). Then Rule (I) can be applied to compute $\omega\big(\xi_{\mathcal{L},i};\xi^{\rm st}_{\mathbb{T}^2_i}\big)$.

\begin{figure}[h]
  \centering
  \includegraphics[width=0.65\textwidth]{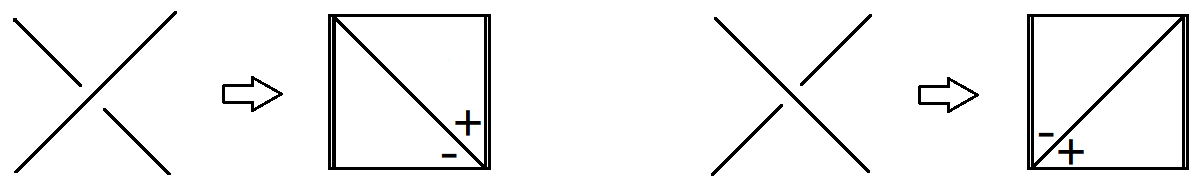}\\
  \caption{Rule (II), for crossings}\label{fig:crossing}
\end{figure}

Finally remained is another issue. When decomposing $E'_L$ into basic pieces, the two planar surfaces belonging to adjacent layers are usually triangulated differently, as illustrated in Fig. \ref{fig:associator0}. These account for associators.
To be precise, for the layers to be correctly glued, we must re-triangulate one of these two surfaces. Let $X_k$ denote the $k$-th layer, numbered from below to up, and let $\xi_k^{\rm u}$, $\xi^{\rm l}_k$, $\xi^{\rm i}_k$, $\xi^{\rm o}_k$ be the s-triangulations of the upper, lower, inner, outer boundary, respectively. We have computed
$F(X_k,\rho_k)(\xi^{\rm o}_k+\xi^{\rm u}_k-\xi^{\rm l}_k-\xi^{\rm i}_k)=0$, so
\begin{align*}
F(X_k,\rho_k)(\xi^{\rm o}_k+\xi^{\rm l}_{k+1}-\xi^{\rm l}_k-\xi^{\rm i}_k)
=\omega(\xi^{\rm l}_{k+1};\xi^{\rm u}_{k}). 
\end{align*}

All these are summarized to give
\begin{alg}\label{alg}
For each $i$, go ahead guided by the orientation of $L_i$, and draw a small triangulated cylinder according to Rule (II) whenever passing a crossing underneath. The result when back to the basepoint is an s-triangulation $\xi_{\mathcal{L},i}$ of $-\mathbb{T}_i^2$.

Use horizontal planes to decompose $E'_L$ into layers. Let $\mu_k^{\rm u}, \mu_k^{\rm l}$ ($1\le k\le m$) denote the s-triangulations of the $k$-th interface induced from the upper, lower layers, respectively.

Then
$F(E_L,\rho_c)\big(\sqcup_{i=1}^n-\xi^{\rm st}_{\mathbb{T}^2_i}\big)=\delta_{\mathcal{L},c}+\theta_{\mathcal{L},c}$,
with
\begin{align}
\delta_{\mathcal{L},c}=\sum\limits_{i=1}^n\omega\big(\xi_{\mathcal{L},i};\xi^{\rm st}_{\mathbb{T}^2_i}\big); \qquad \theta_{\mathcal{L},c}=\sum\limits_{k=1}^m\theta_k, \quad \theta_k=\omega(\mu^{\rm u}_k;\mu^{\rm l}_k).
\end{align}
\end{alg}

\begin{figure}[h]
  \centering
  \includegraphics[width=5cm]{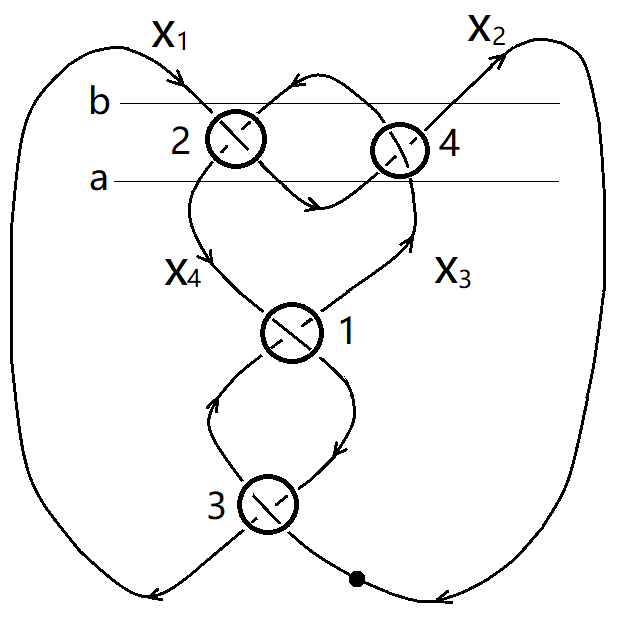}\\
  \caption{The diagram $\mathcal{K}$ for $4_1$, with a basepoint chosen and crossings numbered}\label{fig:eight}
\end{figure}

\begin{figure}[h]
  \centering
  \includegraphics[width=0.8\textwidth]{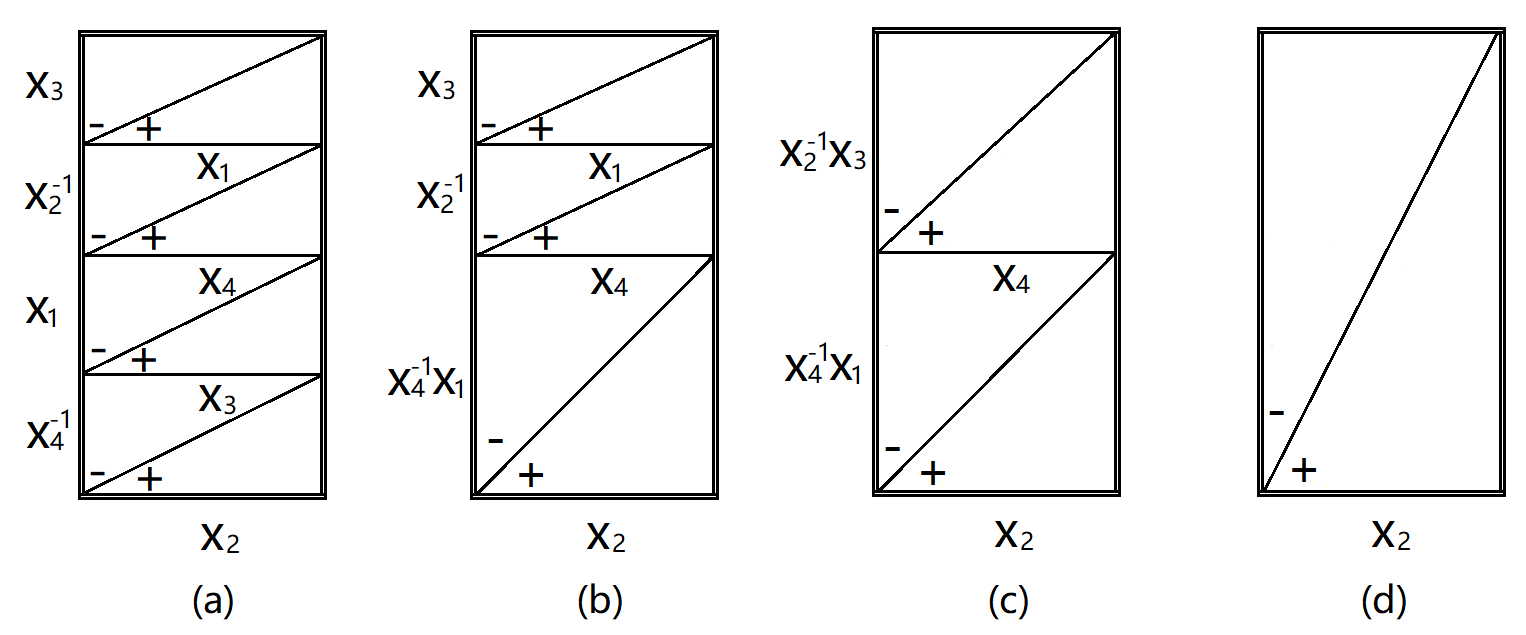}\\
  \caption{(a) $\xi_{\mathcal{K}}$; (d) $\xi^{\rm st}_{\mathbb{T}^2}$; (b), (c): intermediate s-triangulations}\label{fig:contribution-eight}
\end{figure}

\begin{exmp}
\rm Let $K$ be the figure eight knot, for which a diagram $\mathcal{K}$ together with a coloring $c$ is given in Fig. \ref{fig:eight}.

Referring to Fig. \ref{fig:contribution-eight} and successively applying Rule (I),
we get
\begin{align}
\delta_{\mathcal{K},c}=\ &\alpha(x_4^{-1},x_3,x_1)-\alpha(x_4^{-1},x_1,x_4)-\alpha(x_2,x_4^{-1},x_1) \nonumber \\
&+\alpha(x_2^{-1},x_1,x_3)-\alpha(x_2^{-1},x_3,x_2)-\alpha(x_4,x_2^{-1},x_3) \nonumber \\
&+\alpha(x_4^{-1}x_1,x_4,x_2^{-1}x_3)-\alpha(x_4^{-1}x_1,x_2^{-1}x_3,x_2)-\alpha(x_2,x_4^{-1}x_1,x_2^{-1}x_3).
 \label{eq:8-delta}
\end{align}
From Fig. \ref{fig:associator-eight}, using (\ref{eq:associator0}) twice, we see that the associator at level $a$ is
$$\theta_a=-\alpha(x_4^{-1},x_1^{-1},x_1)+\alpha(x_4^{-1}x_1^{-1},x_1,x_3)=\alpha(x_4^{-1}x_1^{-1},x_1,x_3).$$
Similarly, $\theta_b=-\alpha(x_1^{-1}x_3^{-1},x_3,x_2)$. The associators at the other levels all vanish. Hence
\begin{align}
\theta_{\mathcal{K},c}=\alpha(x_4^{-1}x_1^{-1},x_1,x_3)-\alpha(x_1^{-1}x_3^{-1},x_3,x_2).  \label{eq:8-theta}
\end{align}

Thus $F(E_K,\rho_c)(-\xi^{\rm st}_{\mathbb{T}^2})$ equals the sum of the right-hand-sides of (\ref{eq:8-delta}) and (\ref{eq:8-theta}).
\end{exmp}

\begin{figure}[h]
  \centering
  \includegraphics[width=0.7\textwidth]{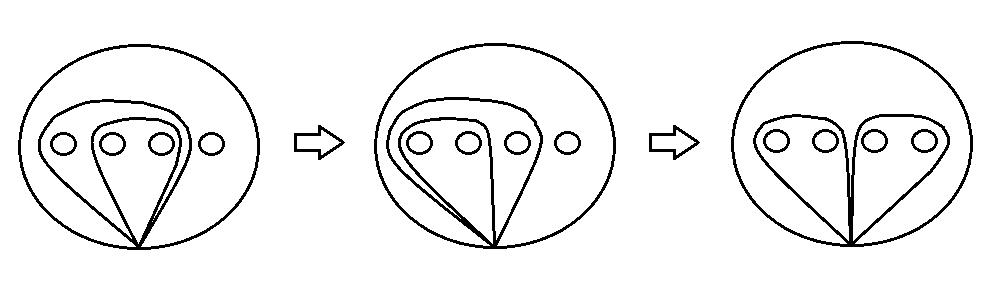}\\
  \caption{At level $a$, the transformation from $\mu^{\rm l}_a$ to $\mu^{\rm u}_a$, where the representation is given by $[1^\frown]\mapsto x_4^{-1}$, $[2^\frown]\mapsto x_1^{-1}$, $[3^\frown]\mapsto x_1$, $[4^\frown]\mapsto x_3$, and $[j\frown 0]\mapsto 1$ for $j=1,\ldots,4$}\label{fig:associator-eight}
\end{figure}

\begin{figure}[h]
  \centering
  \includegraphics[width=0.8\textwidth]{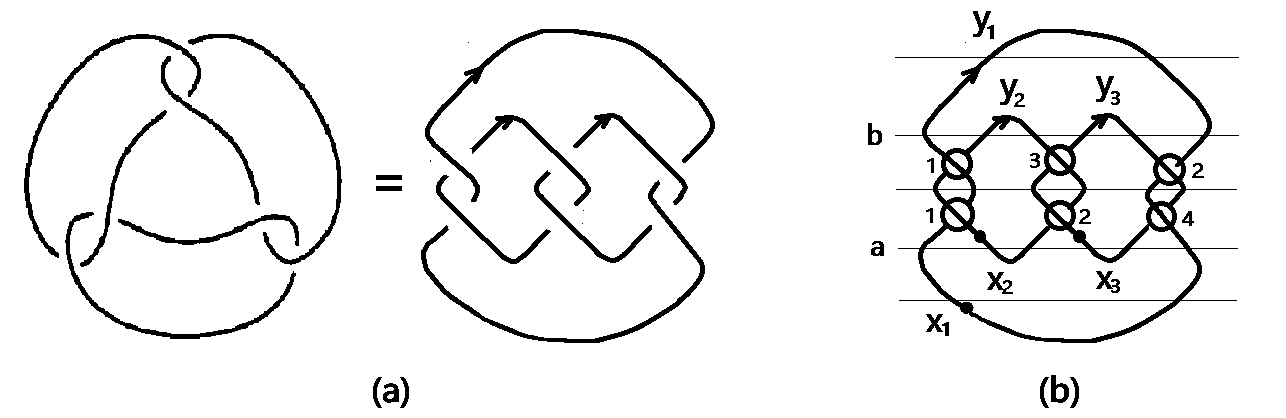}\\
  \caption{(a) The 3-chain link; (b) the second diagram $\mathcal{L}$ is used for computation}\label{fig:3-chain}
\end{figure}
\begin{figure}[h]
  \centering
  \includegraphics[width=0.5\textwidth]{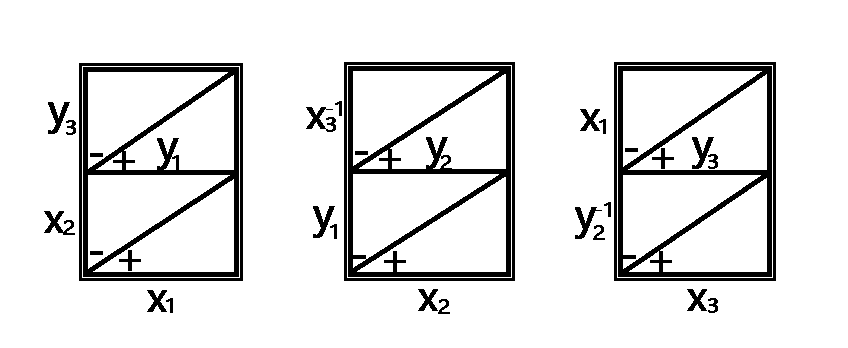}\\
  \caption{$\xi_{\mathcal{L},i}$, $i=1,2,3$}\label{fig:contribution-3-chain}
\end{figure}
\begin{figure}[h]
  \centering
  \includegraphics[width=0.6\textwidth]{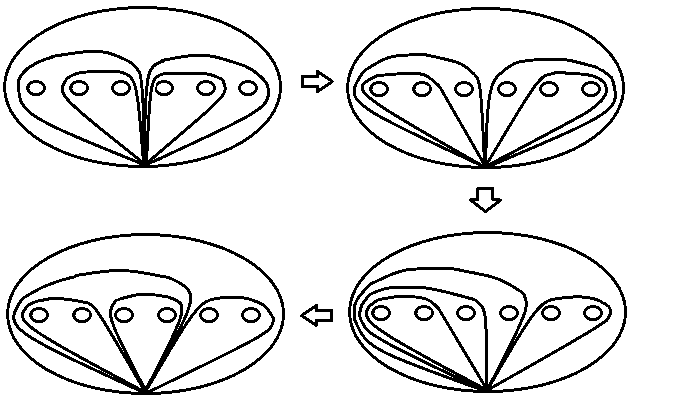}\\
  \caption{At level $a$, the transformation from $\mu^{\rm l}_a$ to $\mu^{\rm u}_a$, where the representation is given by $[1^\frown]\mapsto x_1$, $[2^\frown]\mapsto x_2$, $[3^\frown]\mapsto x_2^{-1}$, $[4^\frown]\mapsto x_3$, $[5^\frown]\mapsto x_3^{-1}$, $[6^\frown]\mapsto x_1^{-1}$, and $[j\frown 0]\mapsto 1$ for $j=1,\ldots,6$}\label{fig:associator-3-chain}
\end{figure}

\begin{exmp}
\rm Let $L$ be the 3-chain link, which is also the $(2,2,2)$-pretzel link. A diagram $\mathcal{L}$ together with a coloring $c$ is given in Fig. \ref{fig:3-chain} (b).

Referring to Fig. \ref{fig:contribution-3-chain} and applying Rule (I), we obtain
\begin{align}
\delta_{\mathcal{L},c}=\ &\alpha(x_2,y_1,y_3)-\alpha(x_2,y_3,x_1)-\alpha(x_1,x_2,y_3) \nonumber \\
&+\alpha(y_1,y_2,x_3^{-1})-\alpha(y_1,x_3^{-1},x_2)-\alpha(x_2,y_1,x_3^{-1})  \nonumber \\
&+\alpha(y_2^{-1},y_3,x_1)-\alpha(y_2^{-1},x_1,x_3)-\alpha(x_3,y_2^{-1},x_1).  \label{eq:3chain-delta}
\end{align}

Referring to Fig. \ref{fig:associator-3-chain}, we have
\begin{align*}
\theta_a&=(\alpha(x_3,x_3^{-1},x_1^{-1})-\alpha(x_1,x_2,x_2^{-1}))-\alpha(x_1,x_3,x_3^{-1}x_1^{-1})
+\alpha(x_1x_2,x_2^{-1},x_3) \\
&=\alpha(x_1x_2,x_2^{-1},x_3).
\end{align*}
Similarly,
$\theta_b=-\alpha(y_1y_2,y_2^{-1},y_3).$
The associators at the other levels all vanish.
So
\begin{align}
\theta_{\mathcal{L},c}=\alpha(y_1y_2,y_2^{-1},y_3)-\alpha(x_1x_2,x_2^{-1},x_3).  \label{eq:3chain-theta}
\end{align}

Now $F(E_L,\rho_c)\big(\sqcup_{i=1}^3-\xi^{\rm st}_{\mathbb{T}^2_i}\big)$ equals the sum of the right-hand-sides of (\ref{eq:3chain-delta}) and (\ref{eq:3chain-theta}).
\end{exmp}

\subsection{Closed 3-manifolds with a surgery presentation}

Consider the closed 3-manifold resulting from a surgery along a link $L$:
$$M=M(L;p_1/q_1,\ldots,p_n/q_n):=E_L\cup_{\sqcup_{i=1}^n[p_i/q_i]}(\sqcup_{i=1}^n{\rm ST}_i), \qquad
{\rm ST}_i=D^2\times S^1,$$
where $[p_i/q_i]=\widehat{\mathbf{a}_i}:\mathbb{T}^2\to\mathbb{T}^2$, with
$\mathbf{a}_i=\left(\begin{array}{cc} p_i & p'_i \\ q_i & q'_i \end{array}\right)\in{\rm SL}(2,\mathbb{Z})$ for some integrs $p'_i,q'_i$ that are irrelevant; the $i$-th solid torus ${\rm ST}_i$ is glued onto $E_L$ so that
$[p_i/q_i](S^1\times 1)=\mathfrak{m}_i^{p_i}\mathfrak{l}_i^{q_i}$.

Given $\rho\in\mathfrak{B}(M)$, let $\rho_L=\rho|_{E_L}$, and $\rho_i=\rho|_{{\rm ST}_i}$.
Applying Lemma \ref{lem:glue} to $X=E_L\sqcup(\sqcup_{i=1}^n{\rm ST}_i)$, $Y_1=\sqcup_{i=1}^n-\mathbb{T}^2_i$ and $Y_2=\partial E_L$, we deduce
\begin{align*}
F(M,\rho)&=F(E_L,\rho_L)\big(\sqcup_{i=1}^n-\xi^{\rm st}_{\mathbb{T}^2_i}\big)
+\sum\limits_{i=1}^nF({\rm ST}_i,\rho_i)\big([p_i/q_i]^{-1}_{\#}\xi^{\rm st}_{\mathbb{T}^2_i}\big)  \\
&=F(E_L,\rho_L)\big(\sqcup_{i=1}^n-\xi^{\rm st}_{\mathbb{T}^2_i}\big)
+\sum\limits_{i=1}^n\Big(F({\rm ST}_i,\rho_i)\big(\xi^{\rm st}_{\mathbb{T}^2_i}\big)
+\omega^{\phi_{z^i}}\big([p_i/q_i]^{-1}_{\#}\xi^{\rm st}_{\mathbb{T}^2_i};\xi^{\rm st}_{\mathbb{T}^2_i}\big)\Big),
\end{align*}
where $z_i=\rho\big(\mathfrak{m}_i^{p'_i}\mathfrak{l}_i^{q'_i}\big)$, which can be characterized by $z_i^{p_i}=\rho(\mathfrak{l}_i)$ and $z_i^{-q_i}=\rho(\mathfrak{m}_i)$. 
Thus, using (\ref{eq:genus1}), we obtain
\begin{align}
F(M,\rho)=F(E_L,\rho_L)\big(\sqcup_{i=1}^n-\xi^{\rm st}_{\mathbb{T}^2_i}\big)
+\sum\limits_{i=1}^n\epsilon(z_i;-q_i,p_i).   \label{eq:main}
\end{align}

\end{document}